\numberwithin{equation}{section}
\theoremstyle{plain}
\newtheorem{theorem}[equation]{Theorem}
\newtheorem{proposition}[equation]{Proposition}
\newtheorem{lemma}[equation]{Lemma}
\theoremstyle{definition}
\newtheorem{example}[equation]{Example}
\newtheorem{chunk}[equation]{}
\theoremstyle{remark}
\newcommand{\ann}{\operatorname{ann}}
\newcommand{\aqh}[4]{\operatorname{D}_{#1}({#2}/{#3};#4)}
\newcommand{\depth}{\operatorname{depth}}
\renewcommand{\dim}{\operatorname{dim}}
\newcommand{\edim}{\operatorname{edim}}
\newcommand{\fdim}{\operatorname{flat\,dim}}
\newcommand{\fm}{\mathfrak{m}} 
\newcommand{\hh}{\operatorname{H}}
\newcommand{\Ker}{\operatorname{Ker}}
\newcommand{\pdim}{\operatorname{proj\,dim}}
\newcommand{\Tor}{\operatorname{Tor}}
\newcommand{\vf}{\varphi}
\newcommand{\lra}{\longrightarrow}
\newcommand{\xra}{\xrightarrow}
\begin{document}

\title[Freeness criterion]{A freeness criterion without patching\\ for modules over local rings }

\author[S.~Brochard]{Sylvain Brochard}
\address{Institut Montpell\'ierain Alexander Grothendieck, CNRS, Univ. Montpellier}
\email{sylvain.brochard@umontpellier.fr}

\author[S.~B.~Iyengar]{Srikanth B.~Iyengar}
\address{Department of Mathematics,
University of Utah, Salt Lake City, UT 84112, U.S.A.}
\email{iyengar@math.utah.edu}

\author[C.~B.~Khare]{Chandrashekhar  B. Khare}
\address{Department of Mathematics,
University of California, Los Angeles, CA 90095, U.S.A.}
\email{shekhar@math.ucla.edu}

\thanks{Partly supported by NSF grant DMS-200985 (SBI)}

\date{\today}

\keywords{complete intersection, freeness criterion, patching}
\subjclass[2010]{13C10 (primary); 13D02, 11F80,   (secondary)}

\begin{abstract} 
It is proved that if $\varphi\colon A\to B$ is a local homomorphism of commutative noetherian local rings, a nonzero finitely generated $B$-module $N$ whose flat dimension over $A$ is at most $\edim A - \edim B$, is free over $B$, and $\vf$ is a special type of complete intersection. This result is motivated by a ``patching method" developed by Taylor and Wiles, and a conjecture of de Smit, proved by the first author, dealing with the special case when $N$ is flat over $A$.
\end{abstract}

\maketitle
   
\section{Introduction}

The work of Wiles and Taylor-Wiles \cite{Taylor/Wiles:1995,Wiles:1995} on modularity lifting theorems relies on a  patching  method that has been generalized to prove a series of remarkable results that verify in many cases  the Fontaine-Mazur conjecture. The Fontaine-Mazur conjecture relates {\it geometric}  representations of absolute Galois groups of number fields to automorphic forms and motives. The spectacular proof  by Newton and Thorne  \cite{Newton/Thorne:2020} of the automorphy of all symmetric powers of Galois representations  associated to classical newforms is  a recent example.

One of the ingredients in the patching method of Wiles, Taylor-Wiles  et.\ al.\  is a commutative algebra result  that for our purposes is best summarized in \cite[Proposition 2.1]{Diamond:1997}.   The  assumptions there are used to reduce the  proof to a  statement about modules over regular local rings, which follows from the Auslander-Buchsbaum formula.  We spell this  out in the  statement below, as  it  gives  a context for the results  we prove here.

\begin{proposition}
\label{pr:AB}
Let $\mathcal O$ be a DVR, $A\colonequals \mathcal O[\![x_1,\ldots, x_r]\!]$, and $\vf\colon A \to  B$ a  map  of $\mathcal O$-algebras. Suppose the ring $B$ is a quotient of $\mathcal O[\![y_1,\ldots, y_s]\!]$ and $N$ is  a nonzero $B$-module that is finitely generated as an $A$-module and satisfies  $\pdim_AN \leq r-s$. Then $N$ is a free $B$-module and $B=\mathcal O[\![y_1,\ldots, y_s]\!]$. 
\end{proposition}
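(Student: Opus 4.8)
The plan is to push the numerical hypothesis through the Auslander--Buchsbaum formula twice, once over $A$ and once over $B$, with a Krull dimension count in between that forces $B$ to be regular. Throughout I would write $R \colonequals \mathcal O[\![y_1,\ldots,y_s]\!]$, so that $B = R/I$ for some proper ideal $I$ (proper because $N\neq 0$ forces $B\neq 0$), and fix a uniformizer $\pi$ of $\mathcal O$.

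First I would set the stage. The ring $A = \mathcal O[\![x_1,\ldots,x_r]\!]$ is regular local with maximal ideal $(\pi,x_1,\ldots,x_r)$, hence $\dim A = \depth A = r+1$; likewise $R$ is a regular local domain with $\dim R = \depth R = s+1$. Since $\vf$ is module-finite and $A$ is local, $\vf$ is a local homomorphism and $\fm_A B$ is $\fm_B$-primary (because $B/\fm_A B$ is a finite module over the field $A/\fm_A$); in particular $N$, being finite over $B$, is finite over $A$, and its depth is the same computed over $A$ or over $B$. One may also assume $r\geq s$, since otherwise $\pdim_A N\leq r-s<0$ is impossible for $N\neq 0$ and there is nothing to prove.

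Next I would apply Auslander--Buchsbaum over $A$: as $\pdim_A N<\infty$,
\[
\depth_B N \;=\; \depth_A N \;=\; \depth A - \pdim_A N \;\geq\; (r+1)-(r-s) \;=\; s+1 .
\]
Against this I would set the elementary bounds for a nonzero finitely generated module over the quotient $B = R/I$ of the $(s+1)$-dimensional domain $R$:
\[
s+1 \;\leq\; \depth_B N \;\leq\; \dim_B N \;\leq\; \dim B \;\leq\; \dim R \;=\; s+1 ,
\]
so every inequality here is in fact an equality. In particular $\dim R/I = \dim R$; but $R$ is a Cohen--Macaulay domain, so any nonzero element of the proper ideal $I$ would cut the dimension ($\dim R/I \leq \dim R/(a) = \dim R - 1$ for $0\neq a\in I$), and therefore $I = 0$, i.e.\ $B = R = \mathcal O[\![y_1,\ldots,y_s]\!]$, which is regular. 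Finally, with $B$ regular we have $\pdim_B N<\infty$ and $\depth B = \dim B = s+1$, so a second application of Auslander--Buchsbaum gives $\pdim_B N = \depth B - \depth_B N = 0$; hence $N$ is a nonzero finitely generated projective, and therefore free, $B$-module.

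The steps I expect to need the most care are the identification $\depth_A N = \depth_B N$, which rests on $\vf$ being module-finite and local so that $\fm_A B$ is $\fm_B$-primary, and the bookkeeping that legitimizes invoking Auslander--Buchsbaum twice — the first use needs the hypothesis $\pdim_A N<\infty$, the second needs the regularity of $B$ established along the way. Conceptually, the dimension squeeze in the third step is the heart of the matter: it is precisely the bound $\pdim_A N\leq r-s$ that makes every inequality tight, and removing it (as the body of the paper does, replacing $\pdim$ by flat dimension and $r-s$ by $\edim A-\edim B$) is exactly where new ideas beyond this argument are required.
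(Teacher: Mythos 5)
Your proposal is correct and follows essentially the same route as the paper: Auslander--Buchsbaum over $A$ gives $\depth_B N=\depth_A N\geq s+1$, the dimension squeeze over the quotient of the domain $\mathcal O[\![y_1,\ldots,y_s]\!]$ forces $B=\mathcal O[\![y_1,\ldots,y_s]\!]$, and a second application of Auslander--Buchsbaum over the now-regular $B$ yields freeness of $N$. The only differences are cosmetic (you make explicit the primality of $\fm_AB$ behind $\depth_AN=\depth_BN$ and the dimension-cutting argument for $I=0$, where the domain property alone suffices and Cohen--Macaulayness is not needed).
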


\begin{proof}
Since $N$ is finitely generated  over $A$, the depth formula of Auslander and Buchsbaum yields
 \[ 
 \depth_AN+\pdim_AN= \depth A = r+1
 \]
so that $\depth_AN \geq s+1$. This explains the inequality on the left:
\[
s+1\leq \depth_AN = \depth_B N \leq \dim B \leq \dim{\mathcal O}[\![y_1,\ldots, y_s]\!] = s+1\,.
\]
The equality holds as $N$ is finitely generated as an $A$-module, and the inequality on the right holds because $B$ is a quotient of $\mathcal O[\![y_1,\ldots, y_s]\!]$. It follows that $B$ and  $\mathcal O[\![y_1,\ldots, y_s]\!]$ have the same dimension, and hence that they are equal, for the former is a quotient of the latter, which is a domain. In particular $B$ itself is regular and $\pdim_BN$ is finite. Then given $\depth_BN=s+1$, another application of the Auslander-Buchsbaum formula yields that $N$ is a free $B$-module.
\end{proof}

Bart de Smit made the remarkable conjecture that   if $A\to B$ is a local homomorphism of (commutative) artinian local rings of the same embedding dimension, then any $B$-module that is flat as an $A$-module is also  flat as a $B$-module. This strengthens Proposition~\ref{pr:AB} in the case $r=s$ and allows one in principle to dispense with patching in the techniques \`a  la Wiles to prove modularity lifting theorems.

In \cite[Theorem 1.1]{Brochard:2017} the first author proved de Smit's conjecture. The connection to patching  is explained, for example, in \cite[\S 3]{Brochard:2017}.  In fact, in loc. cit.  it is proved  that if  $A,B$ are noetherian local rings with $\edim A\ge \edim B$  and $N$ is a  finitely generated $B$-module that is  flat over $A$,  then $N$ is flat over $B$.   We extend   \cite[Theorem 1.1]{Brochard:2017},  giving \emph{en passant} a new, and simpler, proof of  it, by  proving:

\begin{theorem}[see Theorem \ref{th:Brochard}]
\label{th:intro}
Suppose  $\vf\colon A\to B$ is a local homomorphism of noetherian local rings, and $N$ is a nonzero finitely generated $B$-module whose flat dimension over $A$ satisfies $\fdim_AN\le \edim A-\edim B$, then $N$ is free as a $B$ module and $\vf$ is an exceptional  complete intersection map. 
\end{theorem}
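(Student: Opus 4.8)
\emph{Proof strategy.} The plan is to reduce to a residually trivial, essentially surjective situation over a Cohen factorization, convert the flat\nobreakdash-dimension hypothesis into a projective\nobreakdash-dimension inequality, and then conclude with an Auslander--Buchsbaum computation once $\vf$ is known to be a complete intersection.

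First I would make the standard reductions. Completion changes neither embedding dimensions, depths, nor $\fdim_A N$ (with $N$ replaced by $\widehat B\otimes_B N$, again nonzero and finitely generated over $\widehat B$), and it does not affect the conclusion, so I may assume $A$ and $B$ complete. A ``gonflement'' --- a flat local extension of $A$ with maximal ideal $\fm_A$ extended and residue field $B/\fm_B$ --- then makes $\vf$ residually trivial; enlarging the residue field further to be infinite is harmless and convenient for a prime-avoidance step later on. Now fix a Cohen factorization $A\xra{\dot\vf}A'\xra{\vf'}B$: thus $\dot\vf$ is flat local, $R\colonequals A'/\fm_A A'$ is regular local of dimension $c\colonequals\dim R$ with residue field $k\colonequals B/\fm_B$, and $\vf'$ is surjective, so $\edim A'=\edim A+c$.

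Next, pass from $A$ to $A'$. As $N$ is finitely generated over $A'$ through $\vf'$ and $\dot\vf$ is flat with $R=A'\otimes_A k$, one has $\Tor^A_q(N,k)\cong\Tor^{A'}_q(N,R)$ for all $q$, hence a spectral sequence
\[
E^2_{pq}=\Tor^R_p\!\big(\Tor^A_q(N,k),\,k\big)\ \Longrightarrow\ \Tor^{A'}_{p+q}(N,k),
\]
obtained by tensoring an $A'$-free resolution of $N$ first with $R$ and then with $k=R\otimes_R k$. Since $\Tor^A_q(N,k)=0$ for $q>\fdim_A N$ and $\Tor^R_p(-,k)=0$ for $p>c$, the abutment vanishes for $p+q>c+\fdim_A N$; as $N$ is finitely generated over the local ring $A'$ this gives $\pdim_{A'}N\le c+\fdim_A N<\infty$, so with $\edim A'=\edim A+c$ and the hypothesis,
\[
\pdim_{A'}N\ \le\ c+\edim A-\edim B\ =\ \edim A'-\edim B .
\]
The theorem then follows from the assertion: \emph{if $S$ is a complete local ring, $I\subseteq S$ an ideal, $T\colonequals S/I$, and $N\ne 0$ a finitely generated $T$-module with $\pdim_S N\le g\colonequals\edim S-\edim T$, then $I$ is generated by an $S$-regular sequence lying in a minimal generating set of $\fm_S$, and $N$ is free over $T$.} Applied with $S=A'$, $T=B$, $I=\Ker\vf'$ and unwound through the reductions, this says $N$ is free over $B$ (faithfully flat descent along $B\to\widehat B$) and that $\vf$, after completion and the gonflement, is a surjection whose kernel is generated by a regular sequence extending to a minimal generating system of the maximal ideal --- i.e.\ $\vf$ is an exceptional complete intersection map.

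For that assertion, the crux --- and the main obstacle --- is that \emph{$I$ is generated by an $S$-regular sequence}, equivalently that $\vf$ is a complete intersection homomorphism, equivalently $\aqh{n}{B}{A}{k}=0$ for $n\ge 2$; this is where André--Quillen homology enters, and I would extract it from the projective\nobreakdash-dimension bound on $N$: the change-of-rings spectral sequence $\Tor^B_p(\Tor^{A'}_q(k,B),N)\Rightarrow\Tor^{A'}_{p+q}(k,N)$ has abutment concentrated in degrees $\le\edim A'-\edim B$, which constrains the Betti numbers $\dim_k\Tor^{A'}_q(k,B)$ enough to force $\Tor^{A'}_\bullet(k,B)$ to be the exterior algebra on $\Tor^{A'}_1(k,B)=\Ker\vf'/\fm_{A'}\!\Ker\vf'$, that is $\aqh{2}{B}{A}{k}=0$. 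Granting this, write $I=(x_1,\dots,x_{g'})$ with $x_1,\dots,x_{g'}$ an $S$-regular sequence; then $K^S(x_1,\dots,x_{g'})$ is the minimal free resolution of $T$, and since the $x_i$ annihilate $N$ the differential on $K^S(x_1,\dots,x_{g'})\otimes_S N$ vanishes, so $\Tor^S_i(T,N)\cong N^{\binom{g'}{i}}$; in particular $\Tor^S_{g'}(T,N)\cong N\ne 0$ forces $\pdim_S N\ge g'$. As $(I+\fm_S^2)/\fm_S^2$ is spanned by the images of the $x_i$ we have $g\le g'$, so $g'\le\pdim_S N\le g\le g'$ forces $g'=g=\pdim_S N$ and the $x_i$ linearly independent modulo $\fm_S^2$; finally $\pdim_S N=\pdim_{S/(x_1,\dots,x_g)}N+g$ shows $N$ is free over $S/(x_1,\dots,x_g)=T$. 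The genuinely hard input, and where this improves on \cite{Brochard:2017}, is precisely the complete intersection property of $\vf$ teased out of the bare existence of one nonzero finitely generated module $N$ obeying the numerical inequality; the rest is bookkeeping with Koszul complexes, Cohen factorizations and Auslander--Buchsbaum.
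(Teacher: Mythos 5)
Your reductions are sound and essentially parallel the paper's: completing $B$, taking a Cohen (regular) factorization $A\to A'\to B$, and converting $\fdim_AN\le \edim A-\edim B$ into $\pdim_{A'}N\le \edim A'-\edim B$ (your spectral-sequence argument is a correct proof of the inequality the paper simply cites from Avramov--Foxby--Herzog), and your final Koszul bookkeeping deducing $g'=g$, linear independence modulo $\fm_S^2$, and freeness of $N$ over $T$ is fine \emph{once} the kernel is known to be generated by a regular sequence. But that last hypothesis is exactly where your proposal stops being a proof. The step you yourself call the crux --- that the bound $\pdim_S N\le \edim S-\edim T$ for a single nonzero finitely generated $T$-module $N$ forces $I=\Ker(S\to T)$ to be generated by a regular sequence (equivalently $\aqh 2BA{k}=0$, equivalently $\Tor^{S}_{\bullet}(k,T)$ exterior on $\Tor_1$) --- is asserted, not argued: saying the change-of-rings spectral sequence ``constrains the Betti numbers enough to force'' the exterior-algebra structure is a restatement of the theorem, not a derivation. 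Note that the $E^2$-page is $\Tor^T_p(k,N)^{\beta_q}$ with $\beta_q=\dim_k\Tor^S_q(k,T)$, and a priori $\pdim_TN=\infty$, so no obvious corner or rank count closes the argument; extracting freeness/c.i.\ from such data is precisely what made de Smit's conjecture hard. You also do not address the case where $N$ is not faithful over $T$ (i.e.\ $\ann_TN\neq 0$), which needs a separate argument since your intended structural conclusions concern $S\to T$ itself rather than $S\to T/\ann_TN$.

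The paper avoids any such one-shot structural argument by induction on $\edim A-\edim B$ in the surjective case: since $\pdim_AN<\infty$ and $N$ is faithful over $B$, a lemma of Auslander--Buchsbaum gives a nonzerodivisor in $\Ker(\vf)$, and since $\edim A>\edim B$ forces $\Ker(\vf)\not\subseteq\fm_A^2$, prime avoidance (with one non-prime ideal allowed) produces a nonzerodivisor $x\in\Ker(\vf)\setminus\fm_A^2$; Nagata's theorem then gives $\pdim_{A/xA}N=\pdim_AN-1$ and $\edim(A/xA)=\edim A-1$, so induction applies, and the complete-intersection and exceptionality statements are assembled one hypersurface at a time. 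The non-faithful case is then handled by passing to $\overline B=B/\ann_BN$, comparing embedding dimensions, and using the Jacobi--Zariski sequence plus Nakayama to force $\ann_BN=0$. If you want to salvage your outline, you would either have to supply a genuine proof of the exterior-algebra/André--Quillen vanishing claim (which amounts to reproving the main theorem by a different, harder route, closer in spirit to the original argument of \cite{Brochard:2017}) or replace that step by the inductive nonzerodivisor-peeling argument above.
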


We say $\vf$ is exceptional complete intersection if it is complete intersection and $\edim A - \dim A= \edim B -  \dim B$; see ~\ref{ch:eci}. When $\vf$ is surjective, with kernel $I$, this property is equivalent to the condition that $I$ can be generated by a regular sequence whose images are linearly independent in the cotangent space $\fm_A/\fm_A^2$. There is a similar characterization of this property for general maps, in terms of the cotangent space of $\vf$; see Section~\ref{se:local-algebra} for details. 

The  more general  hypothesis  in our theorem  as opposed to \cite[Theorem 1.1]{Brochard:2017}, gives us the freedom to reduce its proof to the surjective case, using standard results in commutative algebra. This  is instrumental in leading to a proof that is simpler than  the one presented in \cite{Brochard:2017} as we can  apply an induction on $\edim A - \edim B$. A key ingredient in this induction step is a theorem of Nagata that tracks the projective dimension of finitely generated modules along surjective exceptional complete intersection maps; see \ref{ch:Nagata}. In fact, our main theorem, \ref{th:Brochard}, can be seen as a converse to this result. A ``better" converse is proved in Theorem~\ref{th:Brochard2}.

Work of  Calegari and Geraghty \cite{Calegari/Geraghty:2018} in ``positive defect’’ situations,   extending the patching method  in  \cite{Taylor/Wiles:1995} to situations where one patches complexes rather than modules, suggests a version of Theorem~\ref{th:intro} dealing with complexes;  see also  \cite{Khare/Thorne:2017, Thorne:2016}.    The Calegari-Geraghty patching argument leads to a situation where one can apply Proposition \ref{pr:AB} to prove the faithfulness of the action of a deformation ring on the  cohomology in ``top degree’’ of a certain arithmetic manifold. We expect that a version  of Theorem \ref{th:intro}  for complexes would allow one in principle to obviate the  need to patch in positive defect situations, just like the application of  \cite[Theorem 1.1]{Brochard:2017}  in \cite[\S 4]{Newton/Thorne:2021}. We hope to return to this topic in the future.
 
\section{Local algebra}
\label{se:local-algebra}
This section is mostly a recollection of some basic facts from local algebra, for use in the proofs in Section~\ref{se:main}.  The only new material concerns a  class of local maps introduced here called exceptional complete intersections.  Much of the discussion in this section is geared towards characterizing these maps in terms of their tangent spaces. This is easy to do for surjective maps, and even for maps essentially of finite type. With an eye towards future applications we treat more general maps of noetherian rings. To that end, it will be convenient to use Andr\'e-Quillen homology modules.  In fact, we require only the components in degrees zero and one, which are easy to describe, as we do below, and the Jacobi-Zariski sequence. Most of what is needed is already in \cite{Lichtenbaum/Schlessinger:1967}; see also \cite{Avramov/Iyengar:2002, Iyengar:2007}. We also need to refer to \cite{Avramov:1999} for more recent work on complete intersection maps. 

\begin{chunk}
\label{ch:aq-basic}
Let  $A\to B$ be a map of rings and $M$ a $B$-module. We write $\aqh iBAM$ for the $i$th Andr\'e-Quillen homology of the $A$-algebra $B$, with coefficients in $M$, namely:
\[
\aqh iBAM \colonequals \hh_i(\mathrm{L}_{B/A}\otimes_B M)
\]
where $\mathrm{L}_{B/A}$ is the cotagent complex of the map $A\to B$. One has $\aqh iBAM=0$ for $i<0$ and
\[
\aqh 0BAM = \Omega_{B/A}\otimes_BM
\]
where $\Omega_{B/A}$ is the module of K\"ahler differentials of $B$ over $A$. If $A\twoheadrightarrow B$ is surjective, with kernel $I$, then $\aqh 0BAM=0$ and from \cite[Lemma~3.1.2]{Lichtenbaum/Schlessinger:1967} one gets that
\begin{equation}
\label{eq:aq1}
\aqh 1BAM = \frac I{I^2}\otimes_B M\,.
\end{equation}
Given maps of rings $A\to B\to C$ and a $C$-module $N$ one has an exact sequence
\[
\cdot \lra \aqh {i+1}CBN \lra \aqh {i}BAN \lra  \aqh {i}CAN \lra \aqh {i}CBN \lra  \cdots 
\]
called the  Jacobi-Zariski exact sequence associated to the maps. We only need the part of the sequence for $i\le 1$, and for this see~\cite[2.3.5]{Lichtenbaum/Schlessinger:1967}.
\end{chunk}
  
 \begin{chunk}
 \label{ch:local-ring}
 By a \emph{local ring} we mean a commutative, noetherian, local ring. We write $\fm_A$ for the maximal ideal of $A$ and $k_A$ for the residue field $A/\fm_A$. The \emph{cotangent} space of $A$ is the $k_A$-vector space
 \[
 \frac{\fm_A}{\fm^2_A} \cong \aqh 1{k_A}A{k_A}\,;
 \]
the isomorphism is by \eqref{eq:aq1}. The rank of this vector space is the \emph{embedding dimension} of $A$, denoted $\edim A$. There is an inequality $\dim A\le \edim A$, where $\dim A$ is the Krull dimension of $A$; when equality holds, the ring $A$ is said to be \emph{regular}. In this case one has $\aqh i{k_A}A-=0$ for $i\ge 2$, and the converse holds as well; see \ref{ch:ci-surjective}, keeping in mind that $A$ is regular if and only if the quotient map $A\to k_A$ is complete intersection. 
\end{chunk}

\begin{chunk}
\label{ch:local-map}  
We say $\vf\colon A\to B$ is a \emph{local map} to mean that $A$ and $B$ are local rings and $\vf$ is a homomorphism  of rings satisfying $\vf(\fm_A)\subseteq \fm_B$. The \emph{closed fiber} of such a map is the local ring $B/\fm_AB$, with maximal ideal $\fm_B/\fm_AB$, and residue field $k_B$.
 
It is helpful to think of $\aqh 1BA{k_B}$ as the \emph{cotangent space} of the map $A\to B$; when $B=A/I$ it is $I/\fm_A I$.  Here is a simple but useful observation. 
 
 \begin{lemma}
 \label{le:edim}
Let $A\to B$ be a surjective local map. The rank of the map
 \[
 \aqh 1BA{k_B} \lra \aqh 1{k_B}A{k_B}
 \]
 induced by the surjection $B\to k_B$  equals $\edim A - \edim B$.
 \end{lemma}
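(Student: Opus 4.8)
The plan is to run the maps $A\to B\to k_B$ through the Jacobi-Zariski exact sequence recalled in \ref{ch:aq-basic}, with coefficients in $k_B$, and then count dimensions; everything in sight will be a vector space over $k_B$.

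First I would write down the relevant stretch of that sequence, namely
\[
\aqh 1BA{k_B}\lra \aqh 1{k_B}A{k_B}\lra \aqh 1{k_B}B{k_B}\lra \aqh 0BA{k_B}\,,
\]
in which the leftmost arrow is the map in the statement, induced by $B\to k_B$. Since $\vf$ is surjective, $\Omega_{B/A}=0$ and hence $\aqh 0BA{k_B}=\Omega_{B/A}\otimes_B k_B=0$; thus the sequence is exact with a zero at its right-hand end, which says precisely that $\aqh 1{k_B}B{k_B}$ is the cokernel of the map in question.

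Next I would identify the two outer terms. As $\vf$ is a surjective local map one has $k_B=k_A$, and \eqref{eq:aq1}, applied to the quotient maps $A\to k_A$ and $B\to k_B$, gives $k_B$-linear isomorphisms $\aqh 1{k_B}A{k_B}\cong\fm_A/\fm^2_A$ and $\aqh 1{k_B}B{k_B}\cong\fm_B/\fm^2_B$, as recalled in \ref{ch:local-ring}. These are finite-dimensional $k_B$-vector spaces of ranks $\edim A$ and $\edim B$ respectively. Since the displayed sequence is exact, the rank of the map $\aqh 1BA{k_B}\to\aqh 1{k_B}A{k_B}$, that is the dimension of its image, equals $\dim_{k_B}\aqh 1{k_B}A{k_B}-\dim_{k_B}\aqh 1{k_B}B{k_B}=\edim A-\edim B$, as desired.

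I do not expect a real obstacle: the argument is essentially a one-line application of Jacobi-Zariski. The only points deserving a moment's care are the vanishing $\Omega_{B/A}=0$ for a surjection and the identification $k_A=k_B$, which together ensure that all the Andr\'e-Quillen homology modules occurring are genuine $k_B$-vector spaces, so that the dimension count is legitimate. One need not even describe $\aqh 1BA{k_B}$ itself --- it is $I/\fm_A I$ when $B=A/I$ --- though that makes the map wholly explicit.
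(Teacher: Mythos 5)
Your proof is correct and is essentially the paper's own argument: both run $A\to B\to k_B$ through the Jacobi-Zariski sequence, use surjectivity to kill $\aqh 0BA{k_B}$ so the sequence ends in zero, identify the outer terms with the cotangent spaces via \eqref{eq:aq1}, and conclude by counting ranks along the exact sequence. No issues.
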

 
 \begin{proof}
 Set $k\colonequals k_B$. The Jacobi-Zariski sequence induced by $A\to B\to k$ starts as
 \[
 \aqh 1BAk \lra \aqh 1kAk \lra \aqh 1kBk \lra 0
 \]
 The result is clear from the additivity of rank on exact sequences.
 \end{proof}

\end{chunk}

\begin{chunk}
\label{ch:regular-maps}
Following \cite{Avramov/Foxby/Herzog:1994}, we say that a local map $\vf\colon A\to A'$ is \emph{weakly regular} if it is flat and its closed fiber $A'/\fm_AA'$ is regular. In this case there are equalities
\begin{align*}
\dim  A' &= \dim A + \dim (A'/\fm_AA') \\
\edim  A' &= \edim A + \dim (A'/\fm_AA') 
\end{align*}
The first equality is well-known; see \cite[Theorem~A.11]{Bruns/Herzog:1998}.  One way to verify the second one is to consider the maps $A'\to F\to k'$,  where $F\colonequals A'/\fm_A A'$ and $k'\colonequals k_{A'}$, and the associated Jacobi-Zariski sequence
\[
0=\aqh 2{k'}F{k'} \lra \aqh 1F{A'}{k'} \lra \aqh 1{k'}{A'}{k'} \lra \aqh 1{k'}F{k'}\lra 0
\]
where the equality holds because $F$ is a regular local ring; see \ref{ch:local-ring}. Since $\vf$ is flat, base change~\cite[2.3.2]{Lichtenbaum/Schlessinger:1967}  yields
\[
\aqh 1F{A'}{k'}\cong \aqh 1{k_A}A{k'} \cong \aqh 1{k_A}A{k_A}\otimes_{k_A}k'\,.
\]
Thus the sequence above gives the desired equality.
\end{chunk}

\begin{chunk}
Given an $A$-module $M$ and a nonnegative integer $n$, we write $\fdim_AM\le n$  to indicate that $M$ has flat dimension $\le n$; that is to say,  $M$ has a flat resolution of length at most $n$. This condition is equivalent to $\Tor^A_{n+1}(-,M)=0$. When these conditions hold, $\Tor^A_{i}(-,M)=0$ for all $i\ge n+1$.

When the $A$-module $M$ is finitely generated, its flat dimension coincides with its projective dimension.
\end{chunk}

\begin{chunk}
\label{ch:fdim-regular-maps}
If $\vf\colon A\to A'$ is weakly regular, then for any finitely generated $A'$-module $N$ there are inequalities
\begin{align*}
\pdim_{A'}N 
	&\le \fdim_{A}N + \edim(A'/\fm_AA') \\
	&= \fdim_AN + \edim A' - \edim A\,.
\end{align*}
See \cite[Lemma~3.2]{Avramov/Foxby/Herzog:1994} for the inequality; the equality is from \ref{ch:regular-maps}.
\end{chunk}

\begin{chunk}
\label{ch:regular-factorization}
A \emph{regular factorization} of a local map $\vf\colon A\to B$ is a decomposition 
\[
A\xra{\ \dot\vf\ }  A' \xra{\ \vf'\ } B
\]
of $\vf$ where $\dot\vf$ is weakly regular and $\vf'$ is surjective; see  \cite{Avramov/Foxby/Herzog:1994}. It is clear that such factorizations exist if $\vf$ is finite and, more generally, essentially of finite type, that is to say, when $B$ is a localization of a finitely generated $A$-algebra. The main result of \cite{Avramov/Foxby/Herzog:1994} is that such factorizations exist also when $B$ is complete with respect to its $\fm_B$-adic topology; this extends Cohen's structure theorem for complete local rings.

The map $\vf$ is \emph{formally smoothable} if there exists a factorization as above of the composition $A\to B\to \widehat{B}$, where $\widehat{B}$ is the $\fm_B$-adic completion of $B$, in which the closed fiber of $\dot\vf$ is even geometrically regular over $k_A$, the residue field of $A$. Such factorizations exist when, for example, $\vf$ is essentially of finite type, or the extension of fields $k_A\to k_{B}$ is separable; see \cite[(1.1.2)]{Avramov/Foxby/Herzog:1994}.
\end{chunk}

\begin{chunk}
\label{ch:ci-surjective}
A surjective local map $A\to B$ is \emph{complete intersection} if its kernel can be generated by a regular sequence. In this case, if $B=A/I$, then $I/I^2$ is a free $B$-module, of rank equal to $\dim A-\dim B$. Thus
\begin{equation}
\label{eq:ci-con}
\aqh  1BAM \cong \frac I{I^2}\otimes_BM \cong M^{c} \quad \text{where $c=\dim A-\dim B$.}
\end{equation}
Moreover $\aqh iBA-=0$ for $i\ge 2$, and  this property characterizes the complete intersection property~\cite[(1.2)]{Avramov:1999}.
\end{chunk}

\begin{chunk}
\label{ch:ci-maps}
Let $\vf\colon A\to B$ be a local map and let $\widehat{B}$ be the $\fm_B$-adic completion of $B$.  Following \cite{Avramov:1999}, we say that $\vf$ is \emph{complete intersection} if in some regular factorization 
\[
A\to A'\xra{\vf'} \widehat{B}
\]
of the composed map $A\to B\to \widehat{B}$, the surjective map $\vf'$ is complete intersection in the sense of \ref{ch:ci-surjective}. This property is independent of the choice of factorization. When $A\to A''\to B$ is a regular factorization of $\vf$, $\vf$ is complete intersection if and only if $A''\to B$ is complete intersection. The class of complete intersections is closed under composition and flat base change; see~\cite{Avramov:1999} for proofs of these assertions.

Using regular factorizations of maps one can often reduce questions about general complete intersection maps to the surjective case; see, in particular, the proof of Theorem~\ref{th:Brochard}.
\end{chunk}

\begin{lemma}
\label{le:eci}
When $\vf$ is complete intersection there is an inequality
\[
\edim A - \dim A  \le  \edim B - \dim B\,.
\]
\end{lemma}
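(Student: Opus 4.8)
The plan is to pass to a regular factorization of $\vf$ and then play the behaviour of $\edim-\dim$ along a weakly regular map off against a rank count for the surjective complete intersection part. First I would reduce to the case where $B$ is complete: the $\fm_B$-adic completion $B\to\widehat B$ is weakly regular with closed fiber $k_B$, so \ref{ch:regular-maps} gives $\dim\widehat B=\dim B$ and $\edim\widehat B=\edim B$, and by the very definition in \ref{ch:ci-maps} the map $\vf$ is complete intersection precisely when $A\to\widehat B$ is. Hence I may replace $B$ by $\widehat B$ and assume that $\vf$ admits a regular factorization $A\xra{\ \dot\vf\ }A'\xra{\ \vf'\ }B$ in which $\dot\vf$ is weakly regular and $\vf'$ is surjective complete intersection.

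Next I would dispose of the weakly regular part. Set $F\colonequals A'/\fm_AA'$; this is a regular local ring, so $\edim F=\dim F$. By \ref{ch:regular-maps} one has $\dim A'=\dim A+\dim F$ and $\edim A'=\edim A+\dim F$, and subtracting yields
\[
\edim A'-\dim A'=\edim A-\dim A\,.
\]
Thus it suffices to prove $\edim A'-\dim A'\le\edim B-\dim B$, i.e.\ to handle the surjective complete intersection map $\vf'$.

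For that, put $c\colonequals\dim A'-\dim B$. By \eqref{eq:ci-con} we have $\aqh 1B{A'}{k_B}\cong(k_B)^c$, a $k_B$-vector space of dimension $c$. On the other hand Lemma~\ref{le:edim}, applied to $\vf'$, identifies $\edim A'-\edim B$ with the rank of the $k_B$-linear map
\[
\aqh 1B{A'}{k_B}\lra\aqh 1{k_B}{A'}{k_B}
\]
induced by $B\to k_B$; since its source has dimension $c$, this rank is at most $c$. Therefore $\edim A'-\edim B\le c=\dim A'-\dim B$, that is, $\edim A'-\dim A'\le\edim B-\dim B$. Combined with the equality of the previous paragraph, this is the claimed inequality.

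I do not expect a genuine obstacle here: the argument amounts to additivity of rank on the Jacobi--Zariski sequence (exactly as in the proof of Lemma~\ref{le:edim}) together with the trivial bound that the rank of a linear map does not exceed the dimension of its source. The one point that demands care is the bookkeeping in the reduction to the complete case: one must check that the defining regular factorization of a complete intersection map is precisely the datum the argument consumes, and that $\edim$ and $\dim$ are insensitive to $\fm_B$-adic completion.
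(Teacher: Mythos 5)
Your proof is correct and follows essentially the same route as the paper: complete $B$, take a regular factorization $A\to A'\to B$, use \ref{ch:regular-maps} to see that $\edim-\dim$ is unchanged along the weakly regular part, and bound $\edim A'-\edim B$ by $\dim A'-\dim B$ via Lemma~\ref{le:edim} together with \eqref{eq:ci-con}. The only difference is that you spell out the rank count and the completion step in more detail than the paper does.
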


\begin{proof}
We may assume $B$ is $\fm_B$-adically complete. Consider a regular factorization $A\to A'\to B$ of $\vf$. Since $\vf$ is complete intersection, Lemma~\ref{le:edim} and \eqref{eq:ci-con} yield
\[
\edim A' - \edim B \le \dim A' - \dim B\,.
\]
This gives the inequality below:
\[
\edim A - \dim A = \edim A' -   \dim A' \leq \edim B - \dim B
\]
The equality is from \ref{ch:regular-maps}.
\end{proof}

\begin{chunk}
\label{ch:eci}
We say that a complete intersection map $\vf$ is \emph{exceptional} if the inequality in Lemma~\ref{le:eci} is an equality: $\vf$ is complete intersection and 
\[
\edim A - \dim A = \edim B - \dim B\,.
\]
Thus, for example, for such a $\vf$, the ring $A$ is regular if and only if $B$ is regular. Moreover, one can check easily using a regular factorization of $\vf$ that one also has
\[
\dim A - \depth A = \dim B - \depth B\,.
\]
This means that $A$ is Cohen-Macaulay if and only if $B$ is.

It is immediate from \ref{ch:regular-maps} that weakly regular maps are exceptional complete intersections. Here is another simple family of examples with this property.

\begin{example}
\label{ex:rlrs}
Let $A,B$ be regular local rings. Then any local map $\vf\colon A\to B$ is exceptional complete intersection. Indeed, since
\[
\edim A -\dim A = 0 = \edim B - \dim B
\]
it suffices to verify that $\vf$ is complete intersection. This can be checked using Andr\'e-Quillen homology. Here is a direct argument: any surjective map between regular rings is complete intersection, by Chevalley's theorem~\cite[Proposition~2.2.4]{Bruns/Herzog:1998}, and one can reduce to this case using regular factorizations~\ref{ch:regular-factorization}.
\end{example}

The exceptional complete intersection property can be expressed in terms of Andr\'e-Quillen homology, at least for formally smoothable maps.

\begin{lemma}
\label{le:eci-tangents}
Let $\vf\colon A\to B$ be a formally smoothable local map that is complete intersection. Then $\vf$ is exceptional if and only if the natural map
\[
\aqh 1B{A}{k_B} \lra \aqh 1{k_A}{A}{k_A}\otimes_{k_A}k_B
\]
is one-to-one.
\end{lemma}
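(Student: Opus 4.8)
The plan is to reduce to the surjective case by means of a formally smooth regular factorization, where the injectivity becomes the equality case in Lemma~\ref{le:eci}.

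First I would pass to the completion of $B$: Andr\'e--Quillen homology with coefficients in $k_B$ is unchanged by this (use the Jacobi--Zariski sequence of $A\to B\to\widehat B$ and the vanishing of $\aqh i{\widehat B}B{k_B}$, which holds by flat base change since $\widehat B\otimes_B k_B=k_B$), and so are the exceptional complete intersection property and the map in the statement. So I may assume $B$ complete and fix a regular factorization $A\xrightarrow{\dot\vf}A'\xrightarrow{\vf'}B$ in which $F\colonequals A'/\fm_A A'$ is geometrically regular over $k_A$; since $\vf$ is complete intersection, so is the surjection $\vf'$, by~\ref{ch:ci-maps}. I will use two consequences: $\aqh iF{k_A}{-}=0$ for $i\ge1$ (geometric regularity of $F$), which via flat base change along $\dot\vf$ gives $\aqh 1{A'}A{k_B}=0$ (recall $k_{A'}=k_B$); and, again by flat base change, the identification $\aqh 1{k_A}A{k_A}\otimes_{k_A}k_B=\aqh 1{k_A}A{k_B}\cong\aqh 1F{A'}{k_B}$.

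Next I would form the commutative square
\[
\begin{array}{ccc}
\aqh 1BA{k_B} & \lra & \aqh 1B{A'}{k_B}\\
\downarrow & & \downarrow\\
\aqh 1{k_A}A{k_A}\otimes_{k_A}k_B & \lra & \fm_{A'}/\fm_{A'}^2
\end{array}
\]
whose top arrow $\gamma$ is the base-change map from the Jacobi--Zariski sequence of $A\to A'\to B$, whose bottom arrow $\delta$ is (after the identifications above) the injection appearing in~\ref{ch:regular-maps}, whose left vertical arrow is the map in the statement, and whose right vertical arrow $\nu'$ is the map of Lemma~\ref{le:edim} applied to $\vf'$; commutativity is naturality of Jacobi--Zariski in the base and the algebra. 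The vanishing $\aqh 1{A'}A{k_B}=0$ makes $\gamma$ injective as well, and the Jacobi--Zariski sequence of $A\to A'\to B$ identifies $\operatorname{im}\gamma$ with the kernel of the ensuing map $e\colon\aqh 1B{A'}{k_B}\to\aqh 0{A'}A{k_B}=\Omega_{A'/A}\otimes_{A'}k_B$. The key point is the inclusion $\ker\nu'\subseteq\operatorname{im}\gamma=\ker e$: writing $\aqh 1B{A'}{k_B}=I/\fm_{A'}I$ for $I=\ker\vf'$ and $e$ as $\bar f\mapsto df\otimes1$, an element $\bar f$ of $\ker\nu'$ has $f\in\fm_{A'}^2$, so $df\in\fm_{A'}\Omega_{A'/A}$ by the Leibniz rule, whence $e(\bar f)=0$. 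Granting this, injectivity of $\gamma$ and $\delta$ together with $\ker\nu'\subseteq\operatorname{im}\gamma$ forces the left vertical arrow to be injective exactly when $\nu'$ is.

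Finally, $\nu'$ is injective precisely when $\vf$ is exceptional: since $\vf'$ is a surjective complete intersection, $\aqh 1B{A'}{k_B}$ has rank $\dim A'-\dim B$ by~\eqref{eq:ci-con}, while $\nu'$ has rank $\edim A'-\edim B$ by Lemma~\ref{le:edim}, so $\nu'$ is injective if and only if $\edim A'-\dim A'=\edim B-\dim B$, which by the equalities of~\ref{ch:regular-maps} reads $\edim A-\dim A=\edim B-\dim B$. (This last part is essentially the argument proving Lemma~\ref{le:eci}.) The one genuine obstacle I foresee is the bookkeeping around the square --- verifying its commutativity through the naturality of the Jacobi--Zariski maps in both the base and the algebra, and checking that the left vertical arrow really is the map named in the statement --- rather than anything in the homological algebra itself; the Leibniz step and the rank count are both short.
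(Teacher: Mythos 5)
Your overall route is the paper's: complete $B$, choose a regular factorization $A\to A'\to B$ with geometrically regular closed fiber $F$, use the vanishing of $\aqh 1{A'}A{k_B}$ to make the Jacobi--Zariski map $\gamma\colon \aqh 1BA{k_B}\to \aqh 1B{A'}{k_B}$ injective, and dispose of the surjective map $\vf'$ by the rank count combining \eqref{eq:ci-con}, Lemma~\ref{le:edim} and \ref{ch:regular-maps}; your Leibniz computation that $\ker\nu'\subseteq\ker e$, and the ensuing short deduction, are precisely the paper's diagram chase. The extra care you take with the completion step is fine but not where the difficulty lies.

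The step you dismiss as ``bookkeeping'' is in fact the one genuine gap, and it is not bookkeeping: the left-hand vertical arrow of your square, from $\aqh 1BA{k_B}$ to $\aqh 1{k_A}A{k_A}\otimes_{k_A}k_B\cong \aqh 1F{A'}{k_B}$, is not induced by any map of ring pairs --- there is no ring homomorphism $B\to F$, nor $B\to k_A$, and Example~\ref{ex:finally} shows one cannot describe such a map naively through $\Ker\vf$, which may lie in $\fm_A^2$. So ``naturality of Jacobi--Zariski in the base and the algebra'' neither produces this arrow nor yields the commutativity of your square. If instead you define the arrow as $\delta^{-1}\nu'\gamma$, you must first prove that $\nu'\gamma$ lands in the image of $\delta$, and you would still owe an identification of the result with the map named in the statement. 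The paper sidesteps all of this by taking for the bottom row the Jacobi--Zariski sequence of $A\to A'\to k_B$ rather than the inclusion $\delta$ from \ref{ch:regular-maps}: then both vertical maps, $\aqh 1BA{k_B}\to \aqh 1{k_B}A{k_B}$ and $\nu'$, are induced by the single ring map $B\to k_B$, the whole ladder commutes by functoriality, its third vertical is the identity of $\aqh 0{A'}A{k_B}$ (which replaces your Leibniz argument), and the chase is your deduction verbatim. To repair your write-up, either interpret the ``natural map'' as the map induced by $B\to k_B$ into $\aqh 1{k_B}A{k_B}$ --- which is what the paper's own proof actually works with --- and run your argument on that ladder, or else construct a map into $\aqh 1{k_A}A{k_A}\otimes_{k_A}k_B$ and verify the compatibilities you assumed; the latter is a real task, not a routine check.
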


\begin{proof}
In what follows we write $k$ for $k_B$. First consider the case when $\vf$ is surjective and complete intersection; say $B=A/I$. The complete intersection property implies that the rank of the $k$-vector space $\aqh 1BAk$ is $\dim A-\dim B$, by \eqref{eq:ci-con}. Given Lemma~\ref{le:edim} it is then clear that $\vf$ is also exceptional if and only if the map 
\[
\aqh 1BAk \to \aqh 1kAk
\]
is one-to-one.
 
In  the general case, we can assume $B$ is $\fm_B$-adically complete.  Let $A\to A'\to B$ be a regular factorization of $\vf$. Since $\aqh 1{A'}A-=0$, by \cite[(1.1)]{Avramov:1999}, for any $A'$-algebra $C$, it follows from the Jacobi-Zariski sequence  induced by $A\to A'\to C$ that the natural map is one-to-one:
\[
\aqh 1CA- \lra \aqh 1C{A'}-
\]
This justifies the exactness in the rows in the diagram below:
\[
\begin{tikzcd}[column sep=small]
0 \arrow[r] & \aqh 1BAk \arrow[r]\arrow[d] & \aqh 1B{A'}k \arrow[r]\arrow[d] 
		&\aqh 0{A'}Ak \arrow[r] \arrow[d, equal] & \cdots \\
0 \arrow[r] &  \aqh 1kAk  \arrow[r] & \aqh 1k{A'}k \arrow[r] &\aqh 0{A'}Ak \arrow[r] & \cdots
\end{tikzcd}
\]
The diagram is commutative by functoriality of Andr\'e-Quillen homology. The vertical maps on the left and in the middle are induced by $B\to k$.  A simple diagram chase reveals that if one of these is injective then so is the other. It remains to observe that since $A'\to B$ is surjective, the already established case of the result implies the middle one is injective if and only if $A'\to B$ is exceptional.
\end{proof}
\end{chunk}

It follows  from Lemma~\ref{le:eci-tangents} when $\vf\colon A\to B$ is surjective, it is exceptional complete intersection if and only if $\Ker(\vf)$ is generated by a regular sequence whose image in $\fm_A/\fm_A^2$ is a linearly independent set. The forward implication need not hold when $\vf$ is not surjective. 

\begin{example}
\label{ex:finally}
Let $k$ be a field and consider the map of $k$-algebras
\[
\vf\colon k[\!|x,y,z|\!] \lra k[\!|s,t|\!] \quad\text{where $x\mapsto s^2$, $y\mapsto st$, $z\mapsto t^2$.}
\]
This map is exceptional complete intersection; see Example~\ref{ex:rlrs}.  It is easy to check that $\Ker(\vf) = (xz - y^2)$; in particular, it is contained in $\fm_A^2$.
\end{example}

Next we record a result of Nagata~\cite[\S27]{Nagata:1975}  that tracks the projective dimension of modules along surjective exceptional complete intersections.

\begin{chunk}
\label{ch:Nagata}
Let $\vf\colon A\to B$ be a surjective local map and $N$ a finitely generated $B$-module. If $\vf$ is exceptional complete intersection, then  
\[
\edim A - \pdim_A N  =   \edim B -  \pdim_B N\,.
\]
In particular, $\pdim_AN$ is finite if and only if $\pdim_BN$ is finite.
\end{chunk}

See Theorem~\ref{th:Nagata} for a version dealing with non-necessarily surjective exceptional complete intersections, and
Theorems~\ref{th:Brochard} and \ref{th:Brochard2} for  converses.

\section{Criteria for detecting exceptional complete intersections}
\label{se:main}

We begin this section by proving Theorem~\ref{th:intro} from the Introduction, in a slightly more elaborate version. The last part recovers \cite[Theorem~1.1]{Brochard:2017}. We should note that if the $B$-module $N$ happens to be finitely generated over $A$, as would be the case if $\vf$ is a finite map, then its flat dimension equals the projective dimension.

\begin{theorem}
\label{th:Brochard}
Let $\vf\colon A\to B$ be a local map with $\edim A\ge \edim B$. If there exists a nonzero finitely generated $B$-module $N$ satisfying $\fdim_AN\le \edim A-\edim B$, then the following conclusions hold:
\begin{enumerate}[\quad\rm(1)]
\item
$N$ is free as a $B$-module;
\item 
$\vf$ is an exceptional complete intersection;
\item
$\fdim_AN = \fdim_AB = \edim A - \edim B$.
\end{enumerate}
In particular, if  $N$ is flat as an $A$-module, then $\edim A=\edim B$ and $\vf$ is flat.
\end{theorem}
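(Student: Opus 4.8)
The plan is to reduce to the case that $\vf$ is surjective and then to induct on $n\colonequals\edim A-\edim B$, with the theorem of Nagata recalled in~\ref{ch:Nagata} as the main external input.

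\emph{Reduction to surjective $\vf$.} First, replacing $\vf$ by $\widehat A\to\widehat B$ and $N$ by $\widehat N\colonequals\widehat B\otimes_BN$ is harmless: embedding dimensions, the exceptional complete intersection property, and flatness are insensitive to completion; freeness of finitely generated modules descends along the faithfully flat map $B\to\widehat B$; and since $\Tor^A_i(M,\widehat N)\cong\widehat B\otimes_B\Tor^A_i(M,N)$ for every $A$-module $M$, while $\fdim$ over $A$ of a finitely generated $B$-module is detected by the vanishing of $\Tor^A_\bullet(k_A,-)$, the inequality $\fdim_AN\le n$ passes to $\widehat N$ and the three conclusions pass back. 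So assume $A$ and $B$ complete and pick a regular factorization $A\xra{\dot\vf}A'\xra{\vf'}B$, which exists by the main theorem of~\cite{Avramov/Foxby/Herzog:1994}. By~\ref{ch:fdim-regular-maps}, $\pdim_{A'}N\le\fdim_AN+(\edim A'-\edim A)\le\edim A'-\edim B$, and $\edim A'\ge\edim A\ge\edim B$, so $(\vf',N)$ satisfies the hypotheses of the theorem with $\vf'$ surjective. Granting the surjective case, $N$ is $B$-free of some rank $r\ge1$, which is (1); $\vf'$ is an exceptional complete intersection, hence so is $\vf=\vf'\circ\dot\vf$ — complete intersections compose (\ref{ch:ci-maps}), $\dot\vf$ is weakly regular and therefore an exceptional complete intersection (\ref{ch:eci}), and $\edim A-\dim A=\edim A'-\dim A'=\edim B-\dim B$ by~\ref{ch:regular-maps} and the surjective case — which is (2); and $\pdim_{A'}N=\pdim_{A'}B=\edim A'-\edim B$, so~\ref{ch:fdim-regular-maps} gives $\fdim_AN\ge n$, whence $\fdim_AN=n$ and $\fdim_AB=\fdim_A(B^{r})=\fdim_AN=n$, which is (3).

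\emph{The surjective case.} Write $B=A/I$ with $I\subseteq\fm_A$ and induct on $n$. If $\depth A=0$, then $\pdim_AN=\depth A-\depth_AN\le0$ by Auslander--Buchsbaum, so $N$ is $A$-free; since $N\ne0$ and $IN=0$, this forces $I=0$ and $A=B$, and the three conclusions are immediate. In particular this settles $n=0$, so assume $n\ge1$ and $\depth A\ge1$; then $\edim B<\edim A$ forces $I\not\subseteq\fm_A^2$. The crux is:
\begin{claim}
$I$ contains a nonzerodivisor of $A$ lying outside $\fm_A^2$.
\end{claim}
\noindent Given such an $x$, set $\bar A\colonequals A/xA$. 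Then $A\to\bar A$ is a surjective exceptional complete intersection with $\edim\bar A=\edim A-1$, so~\ref{ch:Nagata}, applied to $A\to\bar A$ and the $\bar A$-modules $N$ and $B$, yields $\pdim_{\bar A}N=\pdim_AN-1\le n-1$ and $\pdim_{\bar A}B=\pdim_AB-1$, with finiteness transferred in both cases. Also $\bar A\to B$ is surjective with $\edim\bar A-\edim B=n-1$ and $N\ne0$ is finitely generated over $\bar A$, so by the induction hypothesis $N$ is $B$-free, $\bar A\to B$ is an exceptional complete intersection, and $\pdim_{\bar A}N=\pdim_{\bar A}B=n-1$. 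The first of these is (1); composing $A\to\bar A\to B$ and using that $\edim-\dim$ is unchanged along $A\to\bar A$ and along $\bar A\to B$ gives (2); and $\pdim_AN=\pdim_{\bar A}N+1=n=\pdim_{\bar A}B+1=\pdim_AB$ gives (3). The last sentence of the theorem is then immediate: if $N$ is $A$-flat, then $0=\fdim_AN\le n$, so the hypotheses hold and (3) forces $n=0$, i.e.\ $\edim A=\edim B$, while $\fdim_AB=0$ says $\vf$ is flat.

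\emph{The main obstacle.} Proving the Claim is the one genuinely delicate step, and the only place where the full strength of $\fdim_AN\le n$ (as opposed to merely $\fdim_AN<\infty$) is used. By prime avoidance, the Claim is equivalent to the assertion that $I$ is not contained in any $\mathfrak p\in\operatorname{Ass}A$; note that $\mathfrak p\ne\fm_A$ since $\depth A\ge1$. I would argue by contradiction: assuming $I\subseteq\mathfrak p\in\operatorname{Ass}A$, combine the local input that, since $\depth A_\mathfrak p=0$, any finitely generated $A_\mathfrak p$-module of finite projective dimension is free — so $N_\mathfrak p$, when nonzero, is free and hence $IA_\mathfrak p=0$ — with the global input $\dim A\le\pdim_AN+\dim_AN\le n+\dim B$ furnished by the New Intersection Theorem (which already yields $\edim A-\dim A\ge\edim B-\dim B$), and with the resulting restriction on which primary components of $(0)$ in $A$ lie below $\mathfrak p$. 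Fitting these together to exclude once and for all that $I$ sits inside an associated prime of $A$ is the heart of the proof; everything else is the bookkeeping above together with the change-of-rings content packaged in~\ref{ch:Nagata}.
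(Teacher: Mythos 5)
Your scaffolding (completion, regular factorization, induction on $n=\edim A-\edim B$ with Nagata's theorem~\ref{ch:Nagata} driving the inductive step, and the separate disposal of $\depth A=0$) matches the paper's proof, but the one step you isolate as the ``Claim'' --- that $I=\Ker(\vf)$ contains a nonzerodivisor outside $\fm_A^2$ --- is left unproven, and it is precisely where the real content of the theorem sits. Your own sketch does not close: the Auslander--Buchsbaum lemma \cite[Lemma~6.1]{Auslander/Buchsbaum:1958} produces a nonzerodivisor in $\ann_AN$, which contains $I$ but may be strictly larger, so it says nothing about $I$ itself; and localizing at an associated prime $\mathfrak p\supseteq I$ only yields (when $N_{\mathfrak p}\ne 0$) that $N_{\mathfrak p}$ is free and $IA_{\mathfrak p}=0$, which is not a contradiction, while the case $N_{\mathfrak p}=0$ gives nothing. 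The New Intersection Theorem inequality $\dim A\le n+\dim B$ does not visibly exclude $I\subseteq\mathfrak p$ either, and you concede as much (``fitting these together \dots is the heart of the proof''). So as written the argument has a genuine gap at its crux. (A minor additional slip: the $\depth A=0$ discussion does not ``in particular'' settle $n=0$; that case needs the separate, equally short, observation that $\fdim_AN\le 0$ makes $N$ free over $A$, whence $I\subseteq\ann_AN=0$.)

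The paper circumvents exactly this difficulty by a two-stage argument in the surjective case. First it treats the case where $N$ is \emph{faithful} over $B$, i.e.\ $\ann_AN=\Ker(\vf)$: there the Auslander--Buchsbaum lemma does apply to $\Ker(\vf)$, and prime avoidance (with $\fm_A^2$ as the one non-prime ideal, $I\not\subseteq\fm_A^2$ because $\edim B<\edim A$) produces the desired $x$; the induction then runs as in your write-up. For general $N$ it does \emph{not} prove your Claim; instead it sets $\overline B=B/\ann_BN$, applies the faithful case to $A\to\overline B$, and then shows $\ann_BN=0$ by a separate argument: the count $\edim A-\edim\overline B=\pdim_A\overline B=\pdim_AN\le\edim A-\edim B$ forces $\edim B=\edim\overline B$, and then the Jacobi--Zariski sequence for $A\to B\to\overline B$, together with Lemma~\ref{le:eci-tangents} applied to the exceptional complete intersection $A\to\overline B$ and Lemma~\ref{le:edim}, gives $\aqh 1{\overline B}B{k_B}=\ann_BN/\fm_B\ann_BN=0$, so Nakayama finishes. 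To repair your proof you would either need to supply a genuine argument for the Claim for non-faithful $N$ (which appears to be essentially as hard as the theorem) or adopt this faithful-case reduction and the tangent-space argument.
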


\begin{proof}
We first reduce to the case when $\vf$ is surjective. To that end, note that if $\widehat B$ is the completion of $B$ at its maximal ideal, then for any finitely generated $B$-module $W$ the flatness of the map $B\to \widehat B$ implies
\[
\Tor^A_i(-,W\otimes_B\widehat B) \cong \Tor^A_i(-,W)\otimes_B \widehat B \quad\text{for each $i$.}
\]
The completion map is  faithful, so $\fdim_A(W\otimes_B\widehat B)=\fdim_AW$. Also, the $B$-module $W$ is free if and only if the $\widehat B$-module $W\otimes_B\widehat B$ is free. So replacing $B$ by $\widehat B$ we can assume $\vf$ has a regular factorization:
\[
A\xra{\ \dot\vf\ } A' \xra{\ \vf'\ } B\,;
\]
see \ref{ch:regular-factorization}. The first inequality below is by \ref{ch:fdim-regular-maps}:
\begin{align*}
\pdim_{A'} N
	& \le \fdim_AN + \dim (A'/\fm_AA')\\
        &\le \edim A - \edim B + \dim (A'/\fm_AA')\\
        &= \edim A' - \edim B
\end{align*}
The second one is our hypothesis, whilst the equality is from  \ref{ch:regular-maps}. We claim that it suffices to prove the result for $\vf'$.

Indeed, this is clear for (1) as it concerns only the $B$-module structure on $N$. For  (2) the desired conclusion holds because $\vf$ is complete intersection if and only if $\vf'$ is, by \ref{ch:ci-maps}, and then the that fact that one of them is  exceptional if and only if the other is is immediate from \ref{ch:regular-maps}. Finally if the equality in (3) holds for $\vf'$ then the inequalities above become equalities, so the desired equality holds also for $\vf$.

Thus we assume $\vf$ is surjective and hence that $N$ is finitely generated over $A$.

We first settle the case when the $B$-module $N$ is faithful, that is to say, when $\ann_AN = \Ker(\vf)$. We argue by induction on $\edim A-\edim B$. The base case is when $\edim A=\edim B$, in which case $\fdim_AN\le 0$, that is to say, $N$ is free as an $A$-module. This implies $\Ker(\vf)=0$ and then the desired result is clear. 

Suppose that $\edim A - \edim B\ge 1$. Then the ideal $\Ker(\vf)$ is nonzero, and so the hypothesis that $\pdim_AN$ is finite implies $\ann_AN$, which is $\Ker(\vf)$, contains a nonzero divisor, by a result of Auslander and Buchsbaum~\cite[Lemma~6.1]{Auslander/Buchsbaum:1958}. Moreover since $\edim A - \edim B\ge 1$, the ideal $\Ker(\vf)$ is not contained in $\fm_A^2$, so there exists a nonzero divisor, say $x$, in $\Ker(\vf)\setminus \fm_A^2$. The existence of such an $x$ follows by a standard ``prime avoidance argument" where at most two of the ideals are allowed not to be prime; see, for example, \cite[Theorem~81]{Kaplansky:1974}. Consider the factorization
\[
A\lra A/xA\xra{\ \overline{\vf}\ } B
\]
of $\vf$. Since $x$ is a nonzero divisor not contained in $\fm_A^2$ there is an equality
\[
\pdim_{A/xA} N = \pdim_A N - 1\,;
\]
see \ref{ch:Nagata}. Evidently $\edim(A/xA) = \edim A -1$. Thus the induction hypothesis applies to $\overline{\vf}$, and yields that $\overline{\vf}$ is an exceptional complete intersection, and also that $N$ is free as a $B$-module. It remains to note that, by the choice of $x$, the map $\vf$ is also an exceptional complete intersection. 

This completes the proof when $N$ is faithful as a $B$-module.

For the general case, set $I\colonequals \ann_BN$ and $\overline{B} \colonequals B/I$. Consider the composition
\[
\psi\colon A\xra{\ \vf\ } B\lra \overline{B} \,.
\]
Since $N$ viewed as a $\overline{B}$-module is faithful and  $\edim \overline{B}\le \edim B$, the already verified case of the result applied to  $\psi$ yields that $N$ is free as a $\overline{B} $-module and also that $\psi$ is an exceptional complete intersection. To complete the proof, it suffices to verify that $I=0$.

As $\psi$ is an exceptional complete intersection, one gets the first equality below.
\begin{align*}
\edim A - \edim \overline{B}  
	&= \pdim_A\overline{B} \\
	& =  \pdim_AN  \\
	& \le \edim A - \edim B \\
	& \le \edim A - \edim \overline{B} \,.
\end{align*}
The second equality holds because $N$ is free as a $\overline{B}$-module, the first inequality is by hypothesis, whilst the second one holds because $\overline{B}$ is a quotient of $B$. It follows that $\edim B = \edim \overline{B} $, that is to say, $I\subseteq \fm_B^2$. The maps $A\to B\to \overline{B}$ induce the exact row in the diagram below
\[
\begin{tikzcd}
\aqh 1BAk \arrow[dr] \arrow[r,"\pi"] & \aqh 1{\overline{B}}{A}k \arrow[d,hookrightarrow]  \arrow[r] & \aqh 1{\overline{B}}{B}k \arrow[r] & 0 \\
&\aqh 1kAk
\end{tikzcd}
\]
The diagonal arrow is induced by the maps $A\to B\to k$ and the vertical one by the maps $A\to \overline{B}\to k$; it is an inclusion because $A\to \overline B$ is an exceptional complete intersection; see Lemma~\ref{le:eci-tangents}. Since $\edim B = \edim\overline{B}$, the rank of the latter two  maps are equal, by Lemma~\ref{le:edim}. It follows that $\pi$ is onto and hence 
\[
\frac I{\fm_B I} =  \aqh 1{\overline{B}}Bk=0\,.
\]
Nakayama's lemma yields $I=0$, as desired. 
\end{proof}

Here is a variation on Theorem~\ref{th:Brochard}; unlike in that statement, we have to assume a priori that the projective dimension of the $B$-module $N$ is finite. Compare the hypotheses with Nagata's Theorem~\ref{ch:Nagata}. 

\begin{theorem}
\label{th:Brochard2}
Let $\vf\colon A\to B$ be a local map and $N$ a nonzero finitely generated $B$-module $N$ of finite projective dimension. There is then an inequality
\[
\fdim_AN - \pdim_BN \ge \edim A-\edim B\,.
\]
Moreover, if equality holds then $\vf$ is an exceptional complete intersection. 
\end{theorem}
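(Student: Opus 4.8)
The plan is to reduce immediately to the surjective case, exactly as in the proof of Theorem~\ref{th:Brochard}: replacing $B$ by its $\fm_B$-adic completion $\widehat B$ changes neither $\fdim_AN$ nor $\pdim_BN$ (flatness and faithfulness of $B\to\widehat B$), nor the exceptional complete intersection property of $\vf$, nor finiteness of $\pdim_BN$, so we may assume $\vf$ admits a regular factorization $A\xra{\dot\vf}A'\xra{\vf'}B$. For a weakly regular map there is a standard equality $\fdim_AN=\pdim_{A'}N-\edim(A'/\fm_AA')$; combined with \ref{ch:regular-maps} this rewrites $\fdim_AN-\pdim_BN$ as $\pdim_{A'}N-\edim A'+\edim A-\pdim_BN$, and $\vf$ is exceptional complete intersection iff $\vf'$ is (by \ref{ch:ci-maps} and \ref{ch:regular-maps}). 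So it suffices to prove the inequality, and the equality clause, for the surjective map $\vf'$. (One subtlety: the lemma in \cite{Avramov/Foxby/Herzog:1994} cited in \ref{ch:fdim-regular-maps} gives $\le$; to get the precise equality $\fdim_AN=\pdim_{A'}N-\edim(A'/\fm_AA')$ when $\pdim_{A'}N<\infty$ one uses that $N$ is finitely generated over $A'$ and that the closed fiber is regular, so $\Tor$ can be computed via a Koszul-type comparison — this is routine but should be stated carefully.)

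Now assume $\vf\colon A\to B$ is surjective, so $N$ is finitely generated over $A$, and $\pdim_AN=\fdim_AN$. Since $\pdim_BN<\infty$, Nagata's theorem \ref{ch:Nagata} does \emph{not} apply directly ($\vf$ need not be exceptional complete intersection), but its proof, or rather the underlying inequality, should. The key input I expect to need is the elementary bound: for a surjective local map $A\to B$ and any finitely generated $B$-module $N$ of finite projective dimension over $B$,
\[
\pdim_AN \ \ge\ \pdim_BN + \pdim_A B\,,
\]
which follows by taking a finite $B$-free resolution $F_\bullet\to N$ and a finite $A$-free resolution of $B$ and comparing (or: $\pdim_AN\ge\pdim_A(B\otimes_B^{\mathbf L}N)$ via the spectral sequence $\Tor^B_p(\Tor^A_q(B,k),N)\Rightarrow\Tor^A_{p+q}(k,N)$, whose top corner $\Tor^B_{\pdim_BN}(\Tor^A_{\pdim_AB}(B,k),N)$ is nonzero and cannot be killed). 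Granting this, and using the trivial bound $\pdim_AB\ge\edim A-\edim B$ — which holds because in a minimal $A$-free resolution of $B$ the first syzygy $\Ker(A\to B)$ requires at least $\edim A-\edim B$ generators, as $\dim_k(\Ker(\vf)/\fm_A\Ker(\vf))=\edim A-\edim B$ by Lemma~\ref{le:edim} and \eqref{eq:aq1} — we get $\fdim_AN=\pdim_AN\ge\pdim_BN+\edim A-\edim B$, which is the asserted inequality.

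For the equality clause, suppose $\fdim_AN-\pdim_BN=\edim A-\edim B$. Then both inequalities above are equalities: $\pdim_AB=\edim A-\edim B$ and $\pdim_AN=\pdim_BN+\pdim_AB$. From $\pdim_AB=\edim A-\edim B$ I want to deduce that $\Ker(\vf)$ is generated by a regular sequence whose images are linearly independent in $\fm_A/\fm_A^2$, i.e. $\vf$ is surjective exceptional complete intersection; this is where most of the work lies. The cleanest route is an induction on $\edim A-\edim B$ mirroring the proof of Theorem~\ref{th:Brochard}: if $\edim A=\edim B$ then $\pdim_AB=0$ forces $\Ker(\vf)=0$; otherwise, finiteness of $\pdim_AB$ with $\Ker(\vf)\ne0$ gives a nonzerodivisor in $\Ker(\vf)$ by \cite[Lemma~6.1]{Auslander/Buchsbaum:1958}, and $\pdim_AB\ge1>0$ forces $\Ker(\vf)\not\subseteq\fm_A^2$ (else $\edim A-\edim B$ would still be $\ge1$ but... — more precisely, if $\Ker(\vf)\subseteq\fm_A^2$ then $\edim A=\edim B$, contradiction), so prime avoidance produces a nonzerodivisor $x\in\Ker(\vf)\setminus\fm_A^2$; then $\pdim_{A/xA}B=\pdim_AB-1=\edim(A/xA)-\edim B$ and $A/xA\to B$ is surjective, so the inductive hypothesis makes $A/xA\to B$ exceptional complete intersection, hence so is $\vf$. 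Finally the equality $\pdim_AN=\pdim_BN+\pdim_AB$, now that $\vf$ is exceptional complete intersection, is just a restatement of Nagata \ref{ch:Nagata} (it follows automatically and need not be used again).

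The main obstacle I anticipate is the strict-inequality comparison $\pdim_AN\ge\pdim_BN+\pdim_AB$ together with its sharpness: one must verify the relevant top-corner $\Tor$ is genuinely nonzero (no cancellation), which requires working with minimal resolutions over the local rings and a small spectral-sequence or Nakayama argument; everything else is bookkeeping with the Jacobi–Zariski sequence and the reduction-by-a-nonzerodivisor trick already used for Theorem~\ref{th:Brochard}.
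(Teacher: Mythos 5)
There is a genuine gap, and it sits exactly where the paper flags its crucial input: the finiteness of $\pdim_AB$. Your surjective-case argument rests on the inequality $\pdim_AN\ge \pdim_BN+\pdim_AB$, proved by a ``top corner'' of the change-of-rings spectral sequence $\Tor^B_p(N,\Tor^A_q(B,k))\Rightarrow\Tor^A_{p+q}(N,k)$. That corner argument is fine \emph{when} $\pdim_AB<\infty$, but the hypotheses only give $\fdim_AN<\infty$ and $\pdim_BN<\infty$; if $\pdim_AB=\infty$ there is no top corner in the $q$-direction, the terms $E^2_{\pdim_BN,q}$ receive no differentials but can support them, and you cannot conclude that $\Tor^A_n(N,k)\neq0$ for large $n$. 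So the inequality you call elementary is, in the case that must be excluded, precisely the statement that finiteness of $\pdim_AN$ and $\pdim_BN$ forces finiteness of $\pdim_AB$ --- and that is the nontrivial theorem of Foxby--Iyengar (see also Dwyer--Greenlees--Iyengar) which the paper cites and calls crucial; it is not routine homological algebra. The same finiteness is also needed to launch your induction, since the nonzerodivisor in $\Ker(\vf)$ comes from Auslander--Buchsbaum's lemma applied to $B$, which requires $\pdim_AB<\infty$ (a nonzerodivisor in $\ann_AN$ need not lie in $\Ker\vf$). Once that finiteness is granted, your spectral-sequence identity $\pdim_AN=\pdim_BN+\pdim_AB$ is a legitimate substitute for the paper's Auslander--Buchsbaum depth count, so the real omission is the finiteness itself.

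Two further repairs are needed. First, your ``trivial bound'' $\pdim_AB\ge\edim A-\edim B$ is not justified by counting generators of the first syzygy: a large first Betti number does not bound projective dimension from below, and in any case $\dim_k\bigl(\Ker(\vf)/\fm_A\Ker(\vf)\bigr)$ is only $\ge\edim A-\edim B$ in general (e.g.\ $B=A/\fm_A^2$). The bound is true, but its proof is exactly the induction (Nagata's theorem \ref{ch:Nagata}, prime avoidance, and the Auslander--Buchsbaum nonzerodivisor lemma) that you reserve for the equality clause; run that induction for the inequality as well, as the paper does. Second, in the reduction to the surjective case you assert the equality $\fdim_AN=\pdim_{A'}N-\edim(A'/\fm_AA')$ for the weakly regular map $A\to A'$; this is false in general (take $A$ a field, $A'=k[\![x]\!]$, $N=A'$), but it is also unnecessary: the one-sided inequality of \ref{ch:fdim-regular-maps} is in exactly the direction needed, both for the inequality and, by forcing equality in the surjective step, for the equality clause.
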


The inequality in the statement  can be strict: let $A$ be a field, $B$ a ring of formal power series over $A$,  and set $N\colonequals B$.  

\begin{proof}
We can assume $\fdim_AN$ is finite. First we treat the case when $\vf$ is surjective.  Since $N$ is a nonzero finitely generated $B$-module that is of finite projective dimension over both $A$ and $B$, it follows that $\pdim_AB$ is finite, by \cite[Theorem IV]{Foxby/Iyengar:2001}; see also \cite[Remark~5.6]{Dwyer/Greenlees/Iyengar:2006}. This fact will be crucial in the ensuing proof.

Keep in mind that the projective dimension of $N$ over $B$, and that of $N$ and $B$ over $A$ are finite. Thus, repeatedly applying the equality of Auslander and Buchsbaum~\cite[Theorem~1.3.3]{Bruns/Herzog:1998} yields
\begin{align*}
\pdim_AN - \pdim_BN 
	&=(\depth A - \depth_AN) - (\depth B - \depth_BN) \\
	&=\depth A - \depth B\\
	&=\pdim_AB
\end{align*}
We have also used the fact that the depth of $M$ over $A$ is the same as that over $B$. It thus suffices to prove the result for $N=B$, namely that there is an inequality
\[
\pdim_AB \ge \edim A - \edim B
\]
The fact that $\varphi$ is an exceptional complete intersection if equality holds then follows from Theorem~\ref{th:Brochard}.

From this point on the proof flows as in that of Theorem~\ref{th:Brochard}: We argue by induction on $\edim A - \edim B$; the base case when this number is zero is again a tautology. When $\edim A - \edim B\ge 1$ one can find a nonzero divisor $x$ in $\Ker(\vf) \setminus \fm_A^2$. Then from Nagata's theorem~\ref{ch:Nagata} one gets the equality below
\begin{align*}
\pdim_A B 
	&= \pdim_{A/xA}B +1 \\
	&  \ge \edim (A/xA) -\edim B + 1 \\
        &= \edim A - \edim B
\end{align*}
The inequality is by the induction hypothesis. This gives the desired inequality.

This completes the proof of the result when $\vf$ is surjective. The general case is settled by the standard reduction to the surjective one: Completing $B$ at its $\fm_B$-adic topology, we can assume $\vf$ has a regular factorization $A\to A'\to B$. Then one has inequalities, where \ref{ch:fdim-regular-maps} gives the first one 
\begin{align*}
\fdim_AN - \pdim_BN 
	&\ge \pdim_{A'}N  -\pdim_B N + \edim A - \edim A' \\
	&\ge \edim A - \edim B
\end{align*}
and the second one is by the already established case of the result, applied to the surjection $A'\to B$. This gives the stated inequality, and also that if equality holds, then $\pdim_{A'}N - \pdim_BN = \edim A' - \edim B$, which implies $\vf'$ is exceptional complete intersection; thus $\vf$ has this property, by definition.
\end{proof}

The result below complements Theorem~\ref{th:Brochard2} and is an extension of Nagata's Theorem~\ref{ch:Nagata} to  maps that may not be surjective.

\begin{theorem}
\label{th:Nagata}
If  a local map $\vf\colon A\to B$ is exceptional complete intersection, then for any finitely generated $B$-module $N$ one has
\[
\edim B - \pdim_BN \ge \edim A - \fdim_AN \,;
\]
in particular, $\fdim_AN$ is finite if and only if $\pdim_BN$ is finite. Equality holds when the map $\vf$ is finite.
\end{theorem}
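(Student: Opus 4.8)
The plan is to follow the same two-step strategy used in the proofs of Theorems~\ref{th:Brochard} and~\ref{th:Brochard2}: first establish the inequality (and the finiteness equivalence) when $\vf$ is surjective, then reduce the general case to this one via a regular factorization. For the surjective case, write $\vf\colon A\to B$ with $\Ker(\vf)=I$ generated by a regular sequence whose images in $\fm_A/\fm_A^2$ are linearly independent; by the characterization following Lemma~\ref{le:eci-tangents} this is exactly the exceptional complete intersection hypothesis. One can then peel off the generators one at a time: choosing a nonzerodivisor $x\in I\setminus\fm_A^2$ (which exists whenever $\edim A>\edim B$, since a minimal generator of $I$ that is part of a regular sequence is not in $\fm_A^2$), factor $\vf$ as $A\to A/xA\to B$, note $\edim(A/xA)=\edim A-1$ and that $A/xA\to B$ is again exceptional complete intersection, and apply Nagata's theorem~\ref{ch:Nagata} in the form $\pdim_{A/xA}N=\pdim_AN-1$ when $\pdim_AN<\infty$. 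This sets up an induction on $\edim A-\edim B$ whose base case $\edim A=\edim B$ forces $\vf$ to be an isomorphism (the regular sequence generating $I$ is empty), so the claimed inequality is an equality there. The subtle point is the finiteness equivalence: one direction is Nagata's~\ref{ch:Nagata}, and for the converse—if $\pdim_BN<\infty$ then $\fdim_AN<\infty$—one uses that over a complete intersection surjection $\pdim_AB<\infty$ (each $A/(x_1,\dots,x_i)$ has the Koszul complex as a finite free resolution), whence $\pdim_AN\le\pdim_BN+\pdim_AB<\infty$ by the standard change-of-rings spectral sequence.

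For the general case, complete $B$ at $\fm_B$ and choose a regular factorization $A\xrightarrow{\dot\vf}A'\xrightarrow{\vf'}B$; since $\vf$ is complete intersection, so is $\vf'$, and since $\dot\vf$ is weakly regular, $\vf'$ is again exceptional (by~\ref{ch:regular-maps} and~\ref{ch:ci-maps}). Completion does not change flat dimension over $A$ or projective dimension over $B$, nor freeness, so this is harmless. Now~\ref{ch:fdim-regular-maps} gives $\pdim_{A'}N\le\fdim_AN+\edim A'-\edim A$, i.e. $\edim A-\fdim_AN\le\edim A'-\pdim_{A'}N$, and the surjective case applied to $\vf'$ gives $\edim A'-\pdim_{A'}N\le\edim B-\pdim_BN$; chaining these yields the stated inequality. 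The finiteness equivalence transfers the same way: $\fdim_AN<\infty$ forces $\pdim_{A'}N<\infty$ by~\ref{ch:fdim-regular-maps}, hence $\pdim_BN<\infty$ by the surjective case, and conversely $\pdim_BN<\infty$ gives $\pdim_{A'}N<\infty$ and then $\fdim_AN<\infty$ since $A\to A'$ is flat (so $\pdim_{A'}N<\infty$ implies $\Tor^A_i(-,N)$ can be computed after the faithfully flat base change and vanishes for $i\gg 0$, or more simply $\fdim_AN\le\fdim_{A'}N+\fdim_A A'=\fdim_{A'}N$ as $A'$ is $A$-flat).

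For the last sentence—equality when $\vf$ is finite—the point is that a finite exceptional complete intersection admits a regular factorization $A\to A'\to B$ in which $\dot\vf$ is \emph{module-finite} weakly regular, so in fact $A'=A[\![t_1,\dots,t_d]\!]$ localized, or rather one can arrange $\dim(A'/\fm_AA')=0$ while still flat, forcing $\dot\vf$ to be the completion-type map with $\edim A'=\edim A$; then~\ref{ch:fdim-regular-maps} is an equality and so is Nagata's~\ref{ch:Nagata} for the surjection $\vf'$ (that is precisely the finite case of~\ref{ch:Nagata}), so both inequalities above are equalities. Concretely: when $\vf$ is finite, its closed fiber is artinian, so in the regular factorization the fiber $A'/\fm_AA'$ is a regular artinian local ring, i.e. a field, whence $\edim A'=\edim A$ by~\ref{ch:regular-maps}, and~\ref{ch:fdim-regular-maps} reads $\pdim_{A'}N=\fdim_AN$; combined with the equality in Nagata's theorem for the finite surjection $\vf'$, equality propagates.

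I expect the main obstacle to be the finiteness-equivalence clause, specifically the direction $\pdim_BN<\infty\Rightarrow\fdim_AN<\infty$ in the surjective case: it rests on $\pdim_AB<\infty$, which is immediate from the complete intersection presentation via iterated Koszul complexes, but one must be slightly careful that "complete intersection surjection" here means the kernel is generated by an \emph{$A$-regular} sequence, so that $B$ genuinely has a finite free $A$-resolution. Everything else is bookkeeping with the cited results—Lemma~\ref{le:eci-tangents} for the structure of exceptional complete intersection surjections, \ref{ch:Nagata} for the inductive step, \ref{ch:fdim-regular-maps} and~\ref{ch:regular-maps} for the descent through the regular factorization—assembled in the now-familiar pattern.
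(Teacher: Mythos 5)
Your treatment of the inequality and of the finiteness equivalence is sound and essentially the paper's route: complete $B$, take a regular factorization $A\to A'\to B$, transfer finiteness between $\fdim_A$ and $\pdim_{A'}$ using \ref{ch:fdim-regular-maps} and flatness of $\dot\vf$, and invoke Nagata~\ref{ch:Nagata} for the surjection $A'\to B$ (your induction re-deriving the surjective case is redundant, since once $\vf$ is surjective one has $\fdim_AN=\pdim_AN$ and the statement \emph{is} \ref{ch:Nagata}, but it is not wrong). The paper obtains the inequality by citing Theorem~\ref{th:Brochard2} after the finiteness reductions; your direct chaining of \ref{ch:fdim-regular-maps} with the surjective case is an acceptable alternative.

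The genuine gap is in the last clause, equality when $\vf$ is finite. You assert that a finite map admits a regular factorization whose closed fiber $A'/\fm_AA'$ is artinian, hence a field, so that $\edim A'=\edim A$ and \ref{ch:fdim-regular-maps} becomes an equality. This confuses the closed fiber of $\vf$ (which is indeed artinian) with the closed fiber of $\dot\vf$, and the claim is false in general: tensoring the surjection $A'\twoheadrightarrow B$ with $k_A$ gives a surjection $A'/\fm_AA'\twoheadrightarrow B/\fm_AB$, so whenever $B/\fm_AB$ is not a field the regular fiber must have positive embedding dimension, hence positive dimension. A concrete exceptional complete intersection example is $\vf\colon k[\![x]\!]\to k[\![t]\!]$, $x\mapsto t^2$ (Example~\ref{ex:rlrs}), whose fiber is $k[t]/(t^2)$; every regular factorization, e.g.\ $A\to A[u]_{(x,u)}\to B$ with kernel $(u^2-x)$, has one-dimensional fiber. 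Your chain can be repaired, but only by proving that for $N$ finitely generated over $A$ one has $\pdim_{A'}N=\fdim_AN+\dim(A'/\fm_AA')$, and that is an Auslander--Buchsbaum computation; the paper sidesteps the factorization altogether in this step, applying Auslander--Buchsbaum over $A$ and over $B$ together with $\edim A-\depth A=\edim B-\depth B$ (from \ref{ch:eci}) and $\depth_AN=\depth_BN$ for the finite map $\vf$. As written, your argument for the equality clause does not go through.
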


The inequality in the statement can be strict if $\vf$ is not finite: Let $k$ be a field and $\vf\colon k[\![x,y]\!]\to k[\![s]\!]$ the map of $k$-algebras that maps $x,y$ to $0$, and take $N=k[\![s]\!]$.

\begin{proof}
As before we can assume $\vf$ has a regular factorization $A\to A'\to B$. Then it follows from \ref{ch:fdim-regular-maps} that $\fdim_AN$ is finite if and only $\pdim_{A'}N$ is finite; since $A'\to B$ is a surjective exceptional complete intersection, Nagata's Theorem~\ref{ch:Nagata} yields that the latter holds if and only if $\pdim_BN$ is finite.  Thus in the rest of the argument we can assume that $\fdim_AN$ and $\pdim_BN$ are finite. Then the desired inequality is already in Theorem~\ref{th:Brochard2}. 

When $\vf$ is finite,  $N$ is also finitely generated over $A$, so $\fdim_AN = \pdim_AN$. This and the Auslander-Buchsbaum equality give the first equality below
\begin{align*}
\edim A - \fdim_A N 
	&= \edim A - \depth A + \depth_A N \\
	&=\edim B - \depth B + \depth_B N \\
	&=\edim B - \pdim_BN \,.
\end{align*}
The second equality follows from the displayed equalities in \ref{ch:eci}, and the fact that depth of $N$ over $A$ equals its depth over $B$, since $\vf$ is finite. The last equality is again by  the Auslander-Buchsbaum equality.
\end{proof}

\begin{bibdiv}
\begin{biblist}

\bib{Auslander/Buchsbaum:1958}{article}{
   author={Auslander, Maurice},
   author={Buchsbaum, David A.},
   title={Codimension and multiplicity},
   journal={Ann. of Math. (2)},
   volume={68},
   date={1958},
   pages={625--657},
   issn={0003-486X},
   review={\MR{99978}},
   doi={10.2307/1970159},
}

\bib{Avramov:1999}{article}{
   author={Avramov, Luchezar L.},
   title={Locally complete intersection homomorphisms and a conjecture of
   Quillen on the vanishing of cotangent homology},
   journal={Ann. of Math. (2)},
   volume={150},
   date={1999},
   number={2},
   pages={455--487},
   issn={0003-486X},
   review={\MR{1726700}},
   doi={10.2307/121087},
}
\bib{Avramov/Foxby/Herzog:1994}{article}{
   author={Avramov, Luchezar L.},
   author={Foxby, Hans-Bj\o rn},
   author={Herzog, Bernd},
   title={Structure of local homomorphisms},
   journal={J. Algebra},
   volume={164},
   date={1994},
   number={1},
   pages={124--145},
   issn={0021-8693},
   review={\MR{1268330}},
   doi={10.1006/jabr.1994.1057},
}

\bib{Avramov/Iyengar:2002}{article}{
   author={Avramov, Luchezar L.},
   author={Iyengar, Srikanth},
   title={Homological criteria for regular homomorphisms and for locally
   complete intersection homomorphisms},
   conference={
      title={Algebra, arithmetic and geometry, Part I, II},
      address={Mumbai},
      date={2000},
   },
   book={
      series={Tata Inst. Fund. Res. Stud. Math.},
      volume={16},
      publisher={Tata Inst. Fund. Res., Bombay},
   },
   date={2002},
   pages={97--122},
   review={\MR{1940664}},
}
\bib{Brochard:2017}{article}{
   author={Brochard, Sylvain},
   title={Proof of de Smit's conjecture: a freeness criterion},
   journal={Compos. Math.},
   volume={153},
   date={2017},
   number={11},
   pages={2310--2317},
   issn={0010-437X},
   review={\MR{3692747}},
   doi={10.1112/S0010437X17007370},
}

\bib{Bruns/Herzog:1998}{book}{
   author={Bruns, Winfried},
   author={Herzog, J\"{u}rgen},
   title={Cohen-Macaulay rings (revised edition)},
   series={Cambridge Studies in Advanced Mathematics},
   volume={39},
   publisher={Cambridge University Press, Cambridge},
   date={1998},
   pages={xiv+453},
   isbn={0-521-41068-1},
   review={\MR{1251956}},
}

\bib{Dwyer/Greenlees/Iyengar:2006}{article}{
   author={Dwyer, W.},
   author={Greenlees, J. P. C.},
   author={Iyengar, S.},
   title={Finiteness in derived categories of local rings},
   journal={Comment. Math. Helv.},
   volume={81},
   date={2006},
   number={2},
   pages={383--432},
   issn={0010-2571},
   review={\MR{2225632}},
   doi={10.4171/CMH/56},
}

\bib{Calegari/Geraghty:2018}{article}{
   author={Calegari, Frank},
   author={Geraghty, David},
   title={Modularity lifting beyond the Taylor-Wiles method},
   journal={Invent. Math.},
   volume={211},
   date={2018},
   number={1},
   pages={297--433},
   issn={0020-9910},
   review={\MR{3742760}},
   doi={10.1007/s00222-017-0749-x},
}

\bib{Diamond:1997}{article}{
   author={Diamond, Fred},
   title={The Taylor-Wiles construction and multiplicity one},
   journal={Invent. Math.},
   volume={128},
   date={1997},
   number={2},
   pages={379--391},
   issn={0020-9910},
   review={\MR{1440309}},
   doi={10.1007/s002220050144},
}

\bib{Foxby/Iyengar:2001}{article}{
   author={Foxby, Hans-Bj\o rn},
   author={Iyengar, Srikanth},
   title={Depth and amplitude for unbounded complexes},
   conference={
      title={Commutative algebra},
      address={Grenoble/Lyon},
      date={2001},
   },
   book={
      series={Contemp. Math.},
      volume={331},
      publisher={Amer. Math. Soc., Providence, RI},
   },
   date={2003},
   pages={119--137},
   review={\MR{2013162}},
   doi={10.1090/conm/331/05906},
}

\bib{Iyengar:2007}{article}{
   author={Iyengar, Srikanth},
   title={Andr\'{e}-Quillen homology of commutative algebras},
   conference={
      title={Interactions between homotopy theory and algebra},
   },
   book={
      series={Contemp. Math.},
      volume={436},
      publisher={Amer. Math. Soc., Providence, RI},
   },
   date={2007},
   pages={203--234},
   review={\MR{2355775}},
   doi={10.1090/conm/436/08410},
}

\bib{Kaplansky:1974}{book}{
   author={Kaplansky, Irving},
   title={Commutative rings},
   edition={Revised edition},
   publisher={The University of Chicago Press, Chicago, Ill.-London},
   date={1974},
   pages={ix+182},
   review={\MR{0345945}},
}

\bib{Khare/Thorne:2017}{article}{
   author={Khare, Chandrashekhar B.},
   author={Thorne, Jack A.},
   title={Potential automorphy and the Leopoldt conjecture},
   journal={Amer. J. Math.},
   volume={139},
   date={2017},
   number={5},
   pages={1205--1273},
   issn={0002-9327},
   review={\MR{3702498}},
   doi={10.1353/ajm.2017.0030},
}

\bib{Lichtenbaum/Schlessinger:1967}{article}{
   author={Lichtenbaum, S.},
   author={Schlessinger, M.},
   title={The cotangent complex of a morphism},
   journal={Trans. Amer. Math. Soc.},
   volume={128},
   date={1967},
   pages={41--70},
   issn={0002-9947},
   review={\MR{209339}},
   doi={10.2307/1994516},
}

\bib{Nagata:1975}{book}{
   author={Nagata, Masayoshi},
   title={Local rings},
   note={Corrected reprint},
   publisher={Robert E. Krieger Publishing Co., Huntington, N.Y.},
   date={1975},
   pages={xiii+234},
   isbn={0-88275-228-6},
   review={\MR{0460307}},
}

\bib{Newton/Thorne:2021}{article}{
author={Newton, James},
author={Thorne, Jack A.},
title={Adjoint Selmer groups of automorphic Galois representations of unitary type},
journal={J. Eur. Math. Soc.}, 
status={to appear},
eprint={https://arxiv.org/pdf/1912.11265.pdf},
}

\bib{Newton/Thorne:2020}{article}{
author={Newton, James},
author={Thorne, Jack A.},
title={Symmetric power functoriality for holomorphic modular forms, II},
journal={Publ. Math. Inst. Hautes \'Etudes Sci. }
status={to appear},
doi={10.1007/s10240-021-00126-4},
}

\bib{Taylor/Wiles:1995}{article}{
   author={Taylor, Richard},
   author={Wiles, Andrew},
   title={Ring-theoretic properties of certain Hecke algebras},
   journal={Ann. of Math. (2)},
   volume={141},
   date={1995},
   number={3},
   pages={553--572},
   issn={0003-486X},
   review={\MR{1333036}},
   doi={10.2307/2118560},
}

\bib{Thorne:2016}{article}{
author={Thorne, Jack A.},
title={Beyond the Taylor--Wiles method},
remark={Notes for lectures at the workshop "Deformation theory, Completed Cohomology, Leopoldt Conjecture and K-theory},
status={unpublished},
eprint={https://www.dpmms.cam.ac.uk/~jat58/beyondtw.pdf},
}

\bib{Wiles:1995}{article}{
   author={Wiles, Andrew},
   title={Modular elliptic curves and Fermat's last theorem},
   journal={Ann. of Math. (2)},
   volume={141},
   date={1995},
   number={3},
   pages={443--551},
   issn={0003-486X},
   review={\MR{1333035}},
   doi={10.2307/2118559},
}
\end{biblist}
\end{bibdiv}

\end{document}